\newcommand{\R}{\mathbb R}
\newtheorem{theorem}{Theorem} 
\newtheorem{lemma}{Lemma}
\newdefinition{remark}{Remark}
\newtheorem{definition}{Definition}
\newtheorem{problem}{Problem}
\newtheorem{prop}{Proposition}
\newcommand{\la}[1]{\label{#1}}
\newcommand{\re}[1]{(\ref{#1})}
\newcommand{\bmx}{\begin{bmatrix}}
\newcommand{\emx}{\end{bmatrix}}
\newcommand{\bsm}{\left[\begin{smallmatrix}}
\newcommand{\esm}{\end{smallmatrix}\right]}
\newcommand{\Blue}[1]{\textcolor{blue}{#1}}
\newcommand{\Z}{\ensuremath{\mathbb{Z}}} 
\newcommand{\B}{\mathcal{B}}    
\newcommand{\calL}{\mathcal{L}} 
\begin{document}
\title{Addition and intersection of\\ linear time-invariant behaviors}
\author[1]{Antonio Fazzi\corref{cor1}} \ead{antonio.fazzi@unipd.it}
\author[2]{Ivan Markovsky} \ead{imarkovsky@cimne.upc.edu}
\cortext[cor1]{Corresponding author}
\address[1]{University of Padova \\ Department of Information Engineering \\ Via Gradenigo 6/b, 35131 Padova, Italy\\[.2cm]}
\address[2]{International Centre for Numerical Methods in Engineering (CIMNE)\\ and Catalan Institution for Research and Advanced Studies (ICREA)\\ Gran Capitàn, 08034 Barcelona, Spain}

\begin{abstract}
  We define and analyze the operations of addition and intersection of linear time-invariant systems in the behavioral setting, where systems are viewed as sets of trajectories rather than input-output maps. The classical definition of addition of input-output systems is addition of the outputs with the inputs being equal. In the behavioral setting, addition of systems is defined as addition of all variables. Intersection of linear time-invariant systems was considered before only for the autonomous case in the context of ``common dynamics'' estimation. We generalize the notion of common dynamics to open systems (systems with inputs) as intersection of behaviors. This is done by proposing trajectory-based definitions. The main results of the paper are 1) characterization of the link between  the complexities (number of inputs and order) of the sum and intersection systems, 2) algorithms for computing their kernel and image representations and 3) a duality property of the two operations. 
  Our approach combines polynomial and numerical linear algebra computations.
\end{abstract}

\begin{keyword}
Behavioral approach \sep addition of behaviors \sep common dynamics.
\end{keyword}

\maketitle

\section{Introduction}
\label{sec-1}


The \textit{behavioral setting} \cite{PW98,W07} is an approach to system theory where systems are defined as sets of trajectories rather than input-output maps. Viewing systems as sets has far-reaching consequences. Most importantly, the system is separated from its numerous representations: a representation is an equation, while the system is the solution set. In systems and control the fundamental object of interest is the solution set and not the equation that defines it. The separation of the notion of a system from the one of a representation also allows one to define equivalence of representations. Naturally, two representations are equivalent when their solution sets are equal.

Another important aspect of the behavioral setting is that the variables of interest are not a priori separated into inputs and outputs. An input/output partitioning is, in general, not unique and may not be a priori given; however, the classical approach imposes a fixed one. This may lead to inconsistencies \cite{W07}.

In this paper, we analyze the basic operations of addition and intersection of linear time-invariant (LTI) systems in the behavioral setting. We give trajectory-based definitions of these operations and algorithms that compute their representations starting from given representations of the original systems. The notion of an addition in the behavioral setting differs from the one in the classical setting. While in the latter only the outputs are added, leaving the inputs the same, in the former, all variables (inputs and outputs) are added. The notion of intersection is not even well-defined in the input-output setting. Only the special case of intersection of autonomous systems is considered in the context of the ``common dynamics'' estimation \cite{Papy06,agcdapp,MLT20,sysid2021}.

\subsection{Literature overview} \label{sec:lit}

The operations of addition and intersection in the behavioral setting appear in the literature. For two-dimensional systems \cite{Valcher1,Valcher2}, conditions that allow representing a system as a direct sum of two systems are given in~\cite{Valcher3}. For one-dimensional systems (the topic of this paper), the sum is used in systems' decompositions, such as
 controllable and autonomous~\cite{PW98},
 stable and unstable~\cite{stableunstable}, and
 zero input and zero initial condition. 
The decomposition into subsystems also appears in the modeling by tearing, zooming and linking~\cite{W07}. 

The intersection operation was used for control in the behavioral setting to restrict the behavior by a controller. Control by intersection of the systems' behaviors is equivalent to what is called \textit{full interconnection} \cite{fullinterconnection}, that is the case when the sets of control variables and to be controlled variables coincide. This is a special case of interconnection \cite{W97} in the behavioral setting, where the constraints are imposed only on a subset of the variables.

The computation of a kernel representation of the sum of two systems is given in \cite[Lemma 2.14]{rev52}. A similar result for the intersection operation is given in Section~\ref{sec-3}. All existing results involving the addition and intersection of LTI systems compute them via their kernel representations.

\subsection{Contribution and organization of the paper}

Although various aspects of the addition and intersection operations for dynamical systems are studied in the literature, a complete treatment of these topics is not available. Also, the computational aspect, i.e., the question of how to find the sum and intersection systems in practice, is missing. These gaps are filled in the present paper. 

First, we give trajectory-based definitions of the addition and intersection operations. Then, we characterize the image and kernel representations of the sum and intersection systems in terms of the image and kernel representations of the original systems. These characterizations lead us to new algorithms for the computation of the image and kernel representations of the sum and intersection systems. The approach that we use is new: we represent polynomial algebra operations by equivalent numerical linear algebra operations based on structured matrices. This methodology is of independent interest and has applications beyond the particular problems we solve in the paper. 
\color{blue}
The algorithms for addition and intersection proposed in the paper are readily implementable in practice. We provide Matlab implementation of the methods in

\vspace{.2cm}
\noindent
\url{https://github.com/fazziant/Other/blob/main/kernel-rep-sum.pdf}.
\vspace{.2cm}
  
  \color{black}
Finally, we show how the sum and intersection systems can be computed directly from observed data from the original systems without resorting to any parametric system representations, such as kernel, image, or state-space representations. This approach is in the spirit of the newly emerged data-driven methods for analysis, control, and signal processing \cite{overview-ddctr,tutorial,identifiability}.

The rest of the paper is organized as follows. Section~\ref{sec-2} reviews results and definitions from behavioral system theory that are used in the paper. We define the trajectory-based operations of addition and intersection of behaviors in Section~\ref{sec-3}, and we propose algorithms for their computation in Section~\ref{sec:alg}. Illustrative examples are given in Section~\ref{sec:ex}.

\section{Notation and preliminaries}
\label{sec-2}

A dynamical system is defined by a triple $(\mathcal{T}, \mathcal{W}, \mathcal{B})$, where $\mathcal{T}$ is the time \Blue{axis}, $\mathcal{W}$ is the variable space (we consider $\mathcal{W} = \R^q$), and $\mathcal{B} \subseteq \mathcal{W}^{\mathcal{T}}$ is the set of admissible trajectories, the \emph{behavior}. In the paper, we focus on discrete-time systems, i.e., $\mathcal{T} = \Z$, and associate the system $(\mathcal{T}, \mathcal{W}, \mathcal{B})$ with its behavior $\mathcal{B}$.

\color{blue}
The system $\mathcal{B}\subseteq (\R^q)^\mathcal{T}$ is \emph{linear} if $\mathcal{B}$ is a subspace of $(\R^q)^{\mathcal{T}}$ and $\mathcal{B}$ is \emph{time-invariant} if it is invariant under the action of the \emph{shift operator}
\[(\sigma w)(t) = w (t+1).\] 
The set of LTI systems with $q$ variables is denoted by $\mathcal{L}^q$. 
We denote with $\mathbf{m}(\mathcal{B})$ / $\mathbf{p}(\mathcal{B})$ the number of inputs / outputs of~$\B$, such that $\mathbf{m}(\mathcal{B}) + \mathbf{p}(\mathcal{B}) = q$.

In the paper, we consider a subclass of the LTI systems, the \emph{finite-dimensional LTI systems}. They admit (vector) difference equation, also called \emph{kernel representation \cite{W86}.}
\color{black}

\begin{lemma}
  \label{lemma:kernel}
  A \Blue{finite-dimensional} LTI system $\mathcal{B}\in\mathcal{L}^q$ has a kernel representation, i.e., there is a matrix polynomial operator
  \[R(z) = R_0  + R_1 z + \cdots + R_\ell z^{\ell} \in \R^{\mathbf{p}(\mathcal{B}) \times q}[z]\]
  such that
  \color{blue}
  \begin{equation}
    \label{eq:kernel}
    \mathcal{B} = \{\ w \in (\mathbb{R}^q)^\mathcal{\mathcal{T}} \ | \ R(\sigma) w = 0 \ \}. 
  \end{equation}
  The smallest $\ell$ for which $\B$ has a kernel representation \re{eq:kernel} with $\deg\,R(z)=\ell$ is invariant of the representation and is called the \textit{lag of $\B$}, denoted by $\mathbf{\ell}(\mathcal{B})$.
\end{lemma}

\color{blue}
A kernel representation is not unique. Given a representation \re{eq:kernel}, an equivalent representation can be obtained by premultiplication of $R(z)$ with a unimodular matrix polynomial $U(z)$, i.e., a square matrix polynomial whose determinant is a nonzero constant. In addition, \re{eq:kernel} may have redundant equations. The representation \re{eq:kernel} is called \emph{minimal} if it has the smallest number of equations. It can be shown (see \cite{W86}) that a minimal kernel representation corresponds to a full row rank matrix polynomial $R(z)$, whose degree is $\ell(\mathcal{B})$. 
Note that in a minimal kernel representation, the number of rows of the polynomial matrix~$R(z)$ is equal to the number of outputs~$\mathbf{p}(\mathcal{B})$.
Every kernel representation can be reduced to a minimal one by a suitable transformation. Thus, in the following, we assume that the kernel representations are minimal.

The variables $w$ of $\B\in\calL^q$ can be partitioned element-wise into inputs $u$ and outputs $y$, i.e., there is a permutation matrix $\Pi$, such that $w = \Pi \bsm u\\ y \esm$ \cite{W07}. This leads to the \emph{input-output representation}
\begin{equation}
  \label{eq:inputoutput}
  \mathcal{B} = \{\, w = \Pi \bsm u\\ y \esm \ | \ Q(\sigma) u = P(\sigma) y \,\},
\end{equation}
where
\begin{equation*}
\begin{aligned}
 & R(z) \Pi =: \bmx Q(z) & -P(z)\emx, \qquad\text{with}\quad  \\ 
 & Q(z) \in \R^{\mathbf{p}(\mathcal{B}) \times \mathbf{m}(\mathcal{B})}[z] 
  \quad\text{and}\quad P(z) \in \R^{\mathbf{p}(\mathcal{B}) \times \mathbf{p}(\mathcal{B})}[z].
\end{aligned}
\end{equation*}
The roots of the polynomial $\det\,P(z)$ are the \emph{poles} of the system (associated with the input/output partitioning $w = \Pi \bsm u\\ y \esm$). 
It can be shown that the degree of $\det\,P(z)$ is invariant of the representation (as long as the representation is minimal) and is, therefore, a property of the system. Indeed, $\det\,P(z)$ is the \textit{order} $\mathbf{n}(\mathcal{B})$ of~$\mathcal{B}$, which is usually defined in terms of a (minimal) state-space representation of~$\B$. 

\color{black}

As all system properties, in the behavioral setting, controllability is also defined in terms of the behavior. 
\begin{definition}
	\label{def:contr}
	A system $\mathcal{B}$ is controllable if for all $w_1, w_2 \in \mathcal{B}$, there exists a $\bar{t} > 0$ and a $w \in \mathcal{B}$ such that
	\begin{equation}
	\notag
	w(t)  = \begin{cases}  w_1(t)  &  \text{for } t < 0 \\
	w_2(t)  & \text{for } t \geq \bar{t}.
	\end{cases}
	\end{equation}
\end{definition}

\color{blue}

The controllability property can be checked in terms of a kernel representation of the system: the system is controllable if and only if the matrix polynomial $R(z)$ in a minimal kernel representation of the system is left prime \cite{cdc19}.

Another representation of an LTI system, used in the paper, is the image representation \cite{PW98}.
 
\begin{lemma}
	\label{lemma:image}
	A controllable system $\mathcal{B} \in \mathcal{L}^q$ has an image representation, i.e., there is a matrix polynomial operator $M(\sigma)$, 
	 such that 
	\begin{equation}\la{eq:image}
   \mathcal{B} = \{\, w \in (\mathbb{R}^q)^{\mathcal{T}} \ | \ w =  M(\sigma) v \,\}.
	\end{equation}
\end{lemma}
\noindent
It holds that $M(z)$ has (column) rank  $\mathbf{m}(\mathcal{B})$ and its row dimension is $q$ \cite{PW98}. 
\color{black}

\begin{definition}
  The rows of the matrix polynomial operator $R(\sigma)$ in \re{eq:kernel} are called  \textit{annihilators} of $\mathcal{B}$. The columns of $M(\sigma)$ in \re{eq:image} are called \textit{generators} of $\mathcal{B}$. 
\end{definition}

Given $\mathcal{B} \in \mathcal{L}^q$, its complexity is defined as the pair  $\big(\mathbf{m}(\B), \mathbf{n}(\B)\big)$. We denote the behavior $\mathcal{B}$ restricted to the interval $[1, L]$ with $\mathcal{B}|_L$. I.e., $\mathcal{B}|_L$ is the set of trajectories truncated to the interval $[1, L]$. The dimension of the restricted behavior $\mathcal{B}|_L$ is
\begin{equation}
\label{eq:formuladim}
\text{dim}\ \mathcal{B}|_L = \textbf{n}(\mathcal{B}) +  L \textbf{m}(\mathcal{B}), \quad\text{for } L \geq  \ell(\mathcal{B}).
\end{equation}

Next, we express $\mathcal{B}|_L$ in terms of the polynomials $R(z)$ and $M(z)$ of a kernel and image representations of the system. For this purpose, we use Hankel and multiplication matrices. Given a time series $w(t) \in \R^q$ of length $T$, the block-Hankel matrix $\mathcal{H}_L(w)$ with $L$ block-rows, where $1 \leq L \leq T$ is defined as
\begin{equation}
  \label{hankel}
  \mathcal{H}_L(w) := \begin{bmatrix}
    w(1) & w(2) & \cdots   & w(T-L+1) \\
    w(2) & w(3) &\cdots   & w(T-L+2) \\
    \vdots & \vdots &  & \vdots \\
    w(L) & w(L+1) &\cdots & w(T)
  \end{bmatrix} \in\R^{qL \times (T-L+1)}.
\end{equation}
In what follows, we will refer to $\mathcal{H}_L(w)$ simply as the Hankel matrix.

Given a polynomial \[r(z) = r_0 + r_1 z + \cdots + r_{\ell} z^{\ell} \in \R^{1\times q}[z]\] of degree $\ell$, the multiplication matrix $\mathcal{T}_L(r)$ with $L$ columns, where $L \geq \ell+1$, is defined as:
\begin{equation}
  \label{toepgen}
  \mathcal{T}_L(r) := \begin{bmatrix}
    r_0 & r_1 &\cdots   & r_{\ell} &   &  &   \\
    & r_0 & r_1 & \cdots & r_{\ell}&   &   \\
    &       & \ddots& \ddots&  & \ddots   \\
    &   &               &r_0 &r_1 & \cdots & r_{\ell} 
  \end{bmatrix} \in\R^{(L - \ell) \times L}.
\end{equation}
For a matrix polynomial $R(z) \in \R^{p\times q}[z]$ we define the (generalized) multiplication matrix $\mathcal{T}_L(R)$ with $L$ columns in terms of the rows $R^1(z),\ldots,R^p(z)$:
\[
\mathcal{T}_L(R) := \text{ker}\,\begin{bmatrix}
	\mathcal{T}_L(R^1) \\ \vdots \\ \mathcal{T}_L(R^p)
	\end{bmatrix}.
\]

\begin{definition}
  \label{persi}
  A time series $u =\big(u(1), u(2), \dots, u(T)\big)$ is persistently exciting of order $L$ if the Hankel matrix $\mathcal{H}_L(u)$  is full row rank.  
\end{definition}

We can now state the connection between a (finite length) behavior and the Hankel matrix built from an observed trajectory $w$ \cite{WRMD}.
\begin{lemma}
  \label{lemma:hankel}
  If $\mathcal{B} \in \mathcal{L}^q$ is controllable, $w \in \mathcal{B}|_T$, and the input component~$u$ of~$w$ is persistently exciting of order $L+\mathbf{n}(\mathcal{B})$ ($L \geq \mathbf{\ell}(\mathcal{B})+1$), then  
  \begin{equation*}
    \mathcal{B}|_L = \text{image}\, \mathcal{H}_L(w).
  \end{equation*}
\end{lemma}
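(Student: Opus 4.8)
The plan is to prove the two inclusions separately, deducing the harder one from a rank count. For $\operatorname{image}\mathcal{H}_L(w)\subseteq\mathcal{B}|_L$: each column of $\mathcal{H}_L(w)$ in \eqref{hankel} is the restriction to a window of length $L$ of a time shift of $w$, which lies in $\mathcal{B}|_L$ because $\mathcal{B}$ is time-invariant; linearity of $\mathcal{B}$ then gives the inclusion for every linear combination of columns. Since $L\ge\mathbf{\ell}(\mathcal{B})$, the dimension formula \eqref{eq:formuladim} yields $\dim\mathcal{B}|_L=\mathbf{n}(\mathcal{B})+L\,\mathbf{m}(\mathcal{B})$, so the asserted equality is equivalent to $\operatorname{rank}\mathcal{H}_L(w)=\mathbf{n}(\mathcal{B})+L\,\mathbf{m}(\mathcal{B})$; and since $\operatorname{rank}\mathcal{H}_L(w)\le\dim\mathcal{B}|_L$ always holds, it suffices to bound the left kernel of $\mathcal{H}_L(w)$ from above by $\mathbf{p}(\mathcal{B})L-\mathbf{n}(\mathcal{B})$. (The footnote hypothesis $T-L+1\ge qL$ is what makes full row rank possible at all.)

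The next step is to identify that left kernel with a space of short annihilators. Fix a minimal kernel representation $R(\sigma)$ of $\mathcal{B}$ as in Lemma~\ref{lemma:kernel}, and let $N_L$ be the space of coefficient vectors $(n_0,\dots,n_{L-1})$, $n_i\in\R^{1\times q}$, of polynomial row vectors $n(z)$ of degree at most $L-1$ that annihilate $\mathcal{B}$, i.e.\ $n(z)=\varphi(z)R(z)$ for some polynomial row vector $\varphi$. One inclusion is immediate: if $n(z)=\varphi(z)R(z)$ with $\deg n\le L-1$, then $\deg R=\mathbf{\ell}(\mathcal{B})\le L-1$ and $n(\sigma)w=\varphi(\sigma)R(\sigma)w=0$ wherever it is defined, in particular on all $T-L+1$ columns of $\mathcal{H}_L(w)$, so $N_L$ is contained in the left kernel. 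The converse — that every left-kernel vector in fact arises from an annihilator of the whole behavior — is the heart of the matter, and is where both controllability and the exact persistency-of-excitation order $L+\mathbf{n}(\mathcal{B})$ are used: a left-kernel vector by itself only says that the \emph{single} finite trajectory $w$ satisfies the difference relation $\sum_{i=0}^{L-1}n_i\,w(\cdot+i)=0$ on $[1,T-L+1]$, and this must be upgraded to $n(\sigma)$ being a genuine annihilator of $\mathcal{B}$.

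I expect this upgrade step to be the main obstacle. The mechanism I would use: if $n(\sigma)$ were \emph{not} a polynomial left multiple of $R(\sigma)$, then combining $n(\sigma)w=0$ with the order-$\mathbf{n}(\mathcal{B})$ input/output equations \eqref{eq:inputoutput} satisfied by $w$, in order to eliminate the output samples, produces a nontrivial linear dependence among $L+\mathbf{n}(\mathcal{B})$ consecutive samples of the input $u$ alone, contradicting that $\mathcal{H}_{L+\mathbf{n}(\mathcal{B})}(u)$ has full row rank (Definition~\ref{persi}); controllability is what guarantees that the order entering this elimination is exactly $\mathbf{n}(\mathcal{B})$, so that the degree bound $L+\mathbf{n}(\mathcal{B})-1$ and the nontriviality of the resulting relation are both tight. (Equivalently, one may realize $\mathcal{B}$ in a minimal input/state/output form: writing the $L$-window of $w$ as a full-column-rank map — built from the $L$-step observability matrix, which has full column rank because $L>\mathbf{\ell}(\mathcal{B})$ — applied to $\operatorname{col}\big(\text{$u$-window},\,x(t)\big)$ reduces the claim to $\operatorname{col}\big(\mathcal{H}_L(u),\,[\,x(1)\ \cdots\ x(T-L+1)\,]\big)$ having full row rank $\mathbf{m}(\mathcal{B})L+\mathbf{n}(\mathcal{B})$, which follows from $\mathcal{H}_L(u)$ having full row rank together with full rank of the $\mathbf{n}(\mathcal{B})$-step controllability matrix, propagating the left-kernel relation through the state recursion.)

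It remains to count $\dim N_L$. Choose the minimal representation $R(z)$ row-reduced with row degrees $\ell_1,\dots,\ell_p$, $p=\mathbf{p}(\mathcal{B})$; a standard fact is that for such a representation $\sum_{i=1}^p\ell_i=\mathbf{n}(\mathcal{B})$ (with a compatible input/output partition these are the row degrees of the $P$-block, and $\deg\det P=\sum_i\ell_i$). Then $N_L$ is spanned by the coefficient vectors of $\{\,z^kR_i(z):1\le i\le p,\ 0\le k\le L-1-\ell_i\,\}$ — here $L\ge\mathbf{\ell}(\mathcal{B})+1\ge\ell_i+1$ makes each index set nonempty — and these vectors are linearly independent because $R(z)$ has full row rank over $\R[z]$, so $\dim N_L=\sum_{i=1}^p(L-\ell_i)=\mathbf{p}(\mathcal{B})L-\mathbf{n}(\mathcal{B})$. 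Combining with the two preceding paragraphs, the left kernel of $\mathcal{H}_L(w)$ has dimension exactly $\mathbf{p}(\mathcal{B})L-\mathbf{n}(\mathcal{B})$, hence $\operatorname{rank}\mathcal{H}_L(w)=qL-\mathbf{p}(\mathcal{B})L+\mathbf{n}(\mathcal{B})=\mathbf{m}(\mathcal{B})L+\mathbf{n}(\mathcal{B})=\dim\mathcal{B}|_L$, which together with the first inclusion gives $\mathcal{B}|_L=\operatorname{image}\mathcal{H}_L(w)$.
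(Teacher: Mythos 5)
The paper itself does not prove Lemma~\ref{lemma:hankel}: it is invoked as the classical ``fundamental lemma'' with a citation to \cite{WRMD}, so your argument has to be measured against the standard proofs in the literature rather than against an in-paper proof. Your architecture is the standard one, in its behavioral/polynomial form: the easy inclusion $\operatorname{image}\mathcal{H}_L(w)\subseteq\mathcal{B}|_L$ by shift-invariance and linearity, reduction to a rank count via \eqref{eq:formuladim}, identification of the left kernel of $\mathcal{H}_L(w)$ with the annihilators of $\mathcal{B}$ of degree at most $L-1$, and the count $\dim N_L=\mathbf{p}(\mathcal{B})L-\mathbf{n}(\mathcal{B})$ from a row-reduced minimal $R$ (the predictable-degree argument and $\sum_i\ell_i=\mathbf{n}(\mathcal{B})$ are both correct).

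The genuine gap is that the step you yourself call the heart of the matter --- every left-kernel vector of $\mathcal{H}_L(w)$ is an annihilator of the whole behavior, not merely of the one observed finite trajectory --- is only sketched, and that is exactly where all the content of the lemma sits. The elimination mechanism you describe is the right one, but it must actually be carried out: in the MIMO case one multiplies the putative relation $n(\sigma)w=0$ by $\det P(\sigma)$ (or works with the adjugate), checks the degree and window bookkeeping showing that the eliminated relation is a left-kernel vector of precisely $\mathcal{H}_{L+\mathbf{n}(\mathcal{B})}(u)$ over precisely the $T-L-\mathbf{n}(\mathcal{B})+1$ available columns, and then converts the resulting rational identity $n_u+n_yP^{-1}Q=0$ into $n=\varphi R$ with a \emph{polynomial} $\varphi$ --- this last conversion is where left primeness of $R$ (controllability) is actually used, and none of it is routine enough to leave implicit. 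Your parenthetical state-space alternative is also stated too strongly: full row rank of $\mathcal{H}_L(u)$ together with a full-rank controllability matrix does \emph{not} imply full row rank of $\operatorname{col}\bigl(\mathcal{H}_L(u),\,[\,x(1)\ \cdots\ x(T-L+1)\,]\bigr)$; that implication requires persistency of excitation of order $L+\mathbf{n}(\mathcal{B})$ and a nontrivial propagation argument, which is essentially the entire proof in \cite{WRMD}. In short: right skeleton, correct easy inclusion and dimension count, but the crucial implication is asserted rather than proved.
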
  

\color{blue}
Lemma \ref{lemma:hankel} is a classic result known as the \emph{fundamental lemma}. It has been recently shown  \cite{identifiability} that the assumptions of Lemma \ref{lemma:hankel} (controllability, given input/output partitioning, and persistency of excitation of the input) can be replaced by the following condition
\begin{equation}
  \label{eq:inputfromdata}
  \text{rank} \, \mathcal{H}_L(w) =  \textbf{n}(\mathcal{B}) + L \textbf{m}(\mathcal{B}).
\end{equation}
We refer to \re{eq:inputfromdata} as the \emph{generalized persistency of excitation condition}. Observe that the right-hand side of \eqref{eq:inputfromdata} is the same as in \eqref{eq:formuladim}, hence under the generalized persistency of excitation condition, we have that
\begin{equation}
\label{eq:dimB}
 \text{dim}\ \mathcal{B}|_L = \text{rank}\, \mathcal{H}_L(w).
\end{equation}
This allows us to compute the complexity of $\mathcal{B}$ directly from an observed trajectory $w$ by solving a system of linear equations, see Algorithm \ref{alg:comp}.

\begin{algorithm}
  \caption{Computation of system complexity from a trajectory}
  \label{alg:comp}
  \begin{algorithmic}
    \Require a trajectory $w\in(\R^q)^T$ of $\mathcal{B}$
    \State 1: let $L = \lfloor \frac{T+1}{q+1} \rfloor$
    \State 2: compute $r_1 = \text{rank}\, H_L(w)$ and $r_2 = \text{rank}\, H_{L-1}(w)$
    \State 3: solve the system of equations
    \begin{equation}\la{sys}
      \bmx
      L & 1 \\ L-1 & 1
      \emx \bmx m\\ n \emx = \bmx
      r_1 \\ r_2
      \emx
    \end{equation}
    \Ensure data-generating system's complexity $(m, n)$ 
  \end{algorithmic}
\end{algorithm}

\begin{prop}
  Given a trajectory $w \in \mathcal{B}|_T$, such that the generalized persistency of excitation condition \re{eq:inputfromdata} holds for $L:=\lfloor \frac{T+1}{q+1} \rfloor$ and $L\geq\mathbf{\ell}(\B)+1$, Algorithm \ref{alg:comp} computes the complexity of $\mathcal{B}$. 
\end{prop}
\begin{proof}
The matrix in the left-hand side of \re{sys} is nonsingular, so a solution exists and is unique. The fact that $m = \mathbf{m}(\mathcal{B})$ and $n = \mathbf{n}(\mathcal{B})$, as claimed, is a direct consequence of \eqref{eq:inputfromdata}, which holds by the assumptions of the proposition. 
\end{proof}

\color{black}

\begin{lemma} 
	\label{lemma:syl}
	For an LTI system $\mathcal{B}$ with a kernel representation $\mathcal{B} = \text{ker} \, R(\sigma)$, 
	\begin{equation*}
	\mathcal{B}|_L= \text{ker}\,\mathcal{T}_L(R), \quad \text{for } L \geq \mathbf{\ell}(\B)+1.
	\end{equation*}
\end{lemma}
\begin{proof}
Consider a finite trajectory $w\in\mathcal{B}|_T$ of $\B = \ker\,R(\sigma)$. For each row~$R^i(\sigma)$ of $R(\sigma)$, with degree $\ell_i:=\deg\,R^i(\sigma)$, we have 
  \begin{multline}
    \label{eq:proof}
    R_0^i w(t) + R_1^i \sigma  w(t) + \cdots + R_{\ell_i}^i \sigma^{\ell}  w(t) = 0, \\
    \text{for } t=1, \dots, T-\ell_i \text{ and } i = 1,\ldots,\mathbf{p}(\B),
  \end{multline}
Written in matrix form, the system of equations~\eqref{eq:proof} is $\mathcal{T}_L(R) w = 0$.
\end{proof}

Lemmas \ref{lemma:hankel} and \ref{lemma:syl} are useful because they link behaviors, trajectories, representations, and structured matrices. Hence these results connect system theory using the behavioral approach, linear algebra, and matrix computation.
The result in Lemma \ref{lemma:syl} allows the construction of  finite-length trajectories starting from the system kernel representation. \textcolor{blue}{This is used in \cite{distanceECC23} to define a representation-invariant distance between behaviors.}

\begin{remark}
This work deals with deterministic systems and exact (noiseless) data. If the data are affected by noise, the Hankel matrix $H_L(w)$ is full rank for all $L$. A possible approach for dealing with noisy data is to preprocess the data via Hankel low-rank approximation \cite{hankelode,slra} to satisfy the rank condition in \eqref{eq:inputfromdata}.  
\end{remark}

\section{Addition and intersection of behaviors}
\label{sec-3}

We define the operations of addition and intersection of two LTI behaviors by looking at  the relation between their dimensions and the ones of the original systems.  Then, we state how to compute  the representations of the addition and intersection systems starting from the ones of the original systems. These results can be naturally extended to more than two behaviors. 

\begin{definition}
Given two behaviors, $\mathcal{A}$ and $\mathcal{B}$, with the same number of variables,  their sum is naturally defined as the set of the sums of the elements of $\mathcal{A}$ and $\mathcal{B}$, while their intersection  is defined as the set of  elements that belong to both $\mathcal{A}$ and $\mathcal{B}$:
\begin{equation}
\label{addbeh}
\begin{aligned}
\mathcal{B}_+ &= \mathcal{A} + \mathcal{B} := \{ w=a+b\ |\ a \in \mathcal{A}, b \in \mathcal{B} \}. \\
\mathcal{B}_{\cap} &= \mathcal{A} \cap \mathcal{B} := \{ w \ |\  w \in \mathcal{A} \ \text{and} \ w \in \mathcal{B} \}.
\end{aligned}
\end{equation}
\end{definition}
It can be checked that 
if $\mathcal{A}, \mathcal{B} \in \mathcal{L}^q$, then $\mathcal{B}_+, \mathcal{B}_{\cap} \in \mathcal{L}^q$. In this case, 
the following result states the link between   the starting systems' dimensions and the ones of their sum and  intersection.

\begin{lemma}
		\label{comsum}
		Let $\mathcal{A}|_L, \mathcal{B}|_L \in \mathcal{L}^q$ be two behaviors restricted to the interval $[1, L]$, and consider their sum $\mathcal{B}_+|_L$ and their intersection $\mathcal{B}_{\cap}|_L$. The dimensions of the sum and intersection systems are related to the ones of $\mathcal{A}|_L$ and $\mathcal{B}|_L$ as follows
		\begin{equation}
		\label{eq:dimensions}
		\text{dim}\ (\mathcal{B}_+|_L) = 	\text{dim}\ (\mathcal{A}|_L) + 	\text{dim}\ (\mathcal{B}|_L) - 	\text{dim}\ (\mathcal{B}_{\cap}|_L)  \quad \text{for } L \geq \ell(\mathcal{B}_+).
		\end{equation}
\end{lemma}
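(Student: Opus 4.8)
The plan is to deduce \eqref{eq:dimensions} from the elementary Grassmann identity $\dim(U+V)=\dim U+\dim V-\dim(U\cap V)$ for subspaces of a finite-dimensional vector space, applied to $U=\mathcal{A}|_L$ and $V=\mathcal{B}|_L$ viewed inside $(\R^q)^{L}$. For this to give \eqref{eq:dimensions} I need the two ``restriction commutes with the operation'' identities $\mathcal{B}_+|_L=\mathcal{A}|_L+\mathcal{B}|_L$ and $\mathcal{B}_{\cap}|_L=\mathcal{A}|_L\cap\mathcal{B}|_L$. The first follows directly from the definition \eqref{addbeh}: any $w\in\mathcal{B}_+$ is $a+b$ with $a\in\mathcal{A}$, $b\in\mathcal{B}$, hence $w|_L=a|_L+b|_L$, and conversely $a|_L+b|_L=(a+b)|_L$ with $a+b\in\mathcal{B}_+$; this works for every $L$. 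For the second identity the inclusion $\mathcal{B}_{\cap}|_L\subseteq\mathcal{A}|_L\cap\mathcal{B}|_L$ is immediate, so only the reverse inclusion remains, and this is the single place where the hypothesis $L\ge\ell(\mathcal{B}_+)$ is actually needed.

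To prove $\mathcal{A}|_L\cap\mathcal{B}|_L\subseteq\mathcal{B}_{\cap}|_L$ I would pass to kernel representations and invoke Lemma \ref{lemma:syl}. Writing $\mathcal{A}=\ker R_{\mathcal A}(\sigma)$ and $\mathcal{B}=\ker R_{\mathcal B}(\sigma)$, the intersection $\mathcal{B}_{\cap}=\mathcal{A}\cap\mathcal{B}$ is the kernel of the operator obtained by stacking the rows of $R_{\mathcal A}(\sigma)$ on top of the rows of $R_{\mathcal B}(\sigma)$. By Lemma \ref{lemma:syl}, once $L$ exceeds the degrees in play we have $\mathcal{A}|_L=\ker\mathcal{T}_L(R_{\mathcal A})$ and $\mathcal{B}|_L=\ker\mathcal{T}_L(R_{\mathcal B})$, so $\mathcal{A}|_L\cap\mathcal{B}|_L$ is the common kernel of these two Toeplitz matrices, which is precisely the kernel of the Toeplitz matrix built from the stacked operator; a second application of Lemma \ref{lemma:syl} identifies that kernel with $\mathcal{B}_{\cap}|_L$. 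Feeding the two restriction identities into the Grassmann formula then yields \eqref{eq:dimensions}.

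The main obstacle is the bookkeeping on the window length. The reverse inclusion genuinely fails for small $L$ --- for instance, the restrictions to $[1,2]$ of a scalar period-$2$ behavior and a scalar period-$3$ behavior both fill $\R^2$, whereas their intersection (the constant sequences) restricts to a line --- so one must check that the stated bound $L\ge\ell(\mathcal{B}_+)$ is large enough to make Lemma \ref{lemma:syl} applicable simultaneously to $\mathcal{A}$, to $\mathcal{B}$ and to the (generally non-minimal) stacked representation of $\mathcal{B}_{\cap}$, and to make \eqref{eq:formuladim} valid for $\mathcal{B}_+$. This comes down to comparing $\ell(\mathcal{B}_+)$ with the lags of the other three behaviors, which is transparent in the scalar case --- there $\mathcal{B}_+$ and $\mathcal{B}_{\cap}$ are the kernels of the least common multiple and the greatest common divisor of the two kernel polynomials, so $\ell(\mathcal{B}_+)$ dominates all of them --- and carries over to the general case.
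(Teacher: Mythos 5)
Your route is genuinely different from the paper's. The paper's proof simply expands the four dimensions with \eqref{eq:formuladim} and then invokes the additivity relations \eqref{eq:inputssum} for the number of inputs and the order, which it declares to be ``straightforward definitions''; you instead try to derive \eqref{eq:dimensions} from the Grassmann identity for subspaces of $(\R^q)^L$ together with the two commutation identities $(\mathcal{A}+\mathcal{B})|_L=\mathcal{A}|_L+\mathcal{B}|_L$ and $(\mathcal{A}\cap\mathcal{B})|_L=\mathcal{A}|_L\cap\mathcal{B}|_L$. The first identity and the Grassmann step are fine, and your approach has the merit of trying to actually prove the content that the paper's proof only asserts.

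The gap is in the second commutation identity, which is exactly the crux. Your justification applies Lemma~\ref{lemma:syl} to the stacked operator $\bigl[\begin{smallmatrix}R_a\\ R_b\end{smallmatrix}\bigr]$, but the paper uses that lemma under its standing assumption that the kernel representation is minimal; the stacked representation of $\mathcal{A}\cap\mathcal{B}$ is in general not minimal (in particular not row reduced), and then $\ker\mathcal{T}_L\bigl(\bigl[\begin{smallmatrix}R_a\\ R_b\end{smallmatrix}\bigr]\bigr)$ can be strictly larger than $(\mathcal{A}\cap\mathcal{B})|_L$. Moreover this is not a window-length bookkeeping issue that a comparison with $\ell(\mathcal{B}_+)$ can repair: it can fail for \emph{every} $L$. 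Take $q=2$, $\mathcal{A}=\{w:\ w_2(t)=w_1(t+1)\}$ and $\mathcal{B}=\{w:\ w_2(t)=w_1(t+1)+w_1(t)\}$. Then $\mathcal{A}\cap\mathcal{B}=\{0\}$, while for every $L$ the subspace $\mathcal{A}|_L\cap\mathcal{B}|_L$ is two-dimensional (force $w_1(1),\dots,w_1(L-1)=0$, $w_2(1),\dots,w_2(L-2)=0$, and leave $w_2(L-1)=w_1(L)$ and $w_2(L)$ free); also $\mathcal{A}+\mathcal{B}$ is the full behavior, so $\dim(\mathcal{B}_+|_L)=2L$ whereas $\dim(\mathcal{A}|_L)+\dim(\mathcal{B}|_L)-\dim(\mathcal{B}_\cap|_L)=2L+2$. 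So your closing claim that the scalar argument ``carries over to the general case'' is not merely unproved, it is false as stated (the same example shows that the order relation in \eqref{eq:inputssum}, on which the paper's own proof leans, also needs extra hypotheses). Your argument is complete in the scalar autonomous setting, where the gcd/lcm (subresultant) facts you cite do give $\dim(\mathcal{A}|_L\cap\mathcal{B}|_L)=\deg\gcd$ for $L\ge\ell(\mathcal{B}_+)$; beyond that case you would need either to row-reduce the stacked representation and redo the threshold comparison, or to add assumptions guaranteeing $(\mathcal{A}\cap\mathcal{B})|_L=\mathcal{A}|_L\cap\mathcal{B}|_L$ on the stated range of $L$.
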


\begin{proof}
	Equation \eqref{eq:dimensions} is a consequence of \eqref{eq:formuladim}. If we expand all the terms, we get
   	\begin{equation*}
   	\begin{aligned}
   	\text{dim}\ (\mathcal{B}_+|_L) &= \mathbf{n}(\mathcal{B}_+) +  \mathbf{m}(\mathcal{B}_+) L \\
   	\text{dim}\ (\mathcal{A}|_L) &= \mathbf{n}(\mathcal{A}) +  \mathbf{m}(\mathcal{A}) L \\
   	\text{dim}\ (\mathcal{B}|_L) &= \mathbf{n}(\mathcal{B}) +  \mathbf{m}(\mathcal{B}) L \\
   	\text{dim}\ (\mathcal{B}_{\cap}|_L) &= \mathbf{n}(\mathcal{B}_{\cap}) +  \mathbf{m}(\mathcal{B}_{\cap}) L.
   	\end{aligned}
   	\end{equation*}
   	The result follows from the straightforward definitions 
   	\begin{equation}
   	\label{eq:inputssum}
  \begin{aligned}
   \mathbf{n}(\mathcal{B}_+) &=  \mathbf{n}(\mathcal{A}) + \mathbf{n}(\mathcal{B}) - \mathbf{n}(\mathcal{B}_{\cap}) \\ 
    \mathbf{m}(\mathcal{B}_+) &=  \mathbf{m}(\mathcal{A}) + \mathbf{m}(\mathcal{B}) - \mathbf{m}(\mathcal{B}_{\cap}) 
  \end{aligned}
  \end{equation}
\end{proof}

\begin{remark}
  \label{rmk:noout}
  Depending on the number of inputs  $\mathbf{m}(\mathcal{A})$ and $\mathbf{m}(\mathcal{B})$, the systems $\mathcal{B}_+$ and $\mathcal{B}_{\cap}$ may be trivial systems (systems where all the variables are  inputs).
\end{remark}

If the systems $\mathcal{B}_+$ and $\mathcal{B}_{\cap}$ are in $\mathcal{L}^q$, they admit kernel representations, which can be expressed in terms of the kernel representations of $\mathcal{A}$ and $\mathcal{B}$. If these systems are also controllable, they admit  image representations too, that can be characterized in terms of the image representations of $\mathcal{A}$ and $\mathcal{B}$.
\begin{theorem}
	\label{th:dual1}
	Let $\mathcal{A}, \mathcal{B} \in \mathcal{L}^q$. 
The following hold true:
	\begin{enumerate}
		\item \textcolor{blue}{If $\mathcal{A}, \mathcal{B}$ are controllable, let $P_a, P_b$ be their image representations.} An image representation of their sum is given by the union of generators:
		\begin{equation*}
		\mathcal{B}_+ = \mathcal{A} + \mathcal{B} = \text{image}\,  P_+(\sigma) := \text{image}\, \begin{bmatrix} P_a & P_b \end{bmatrix} (\sigma). 
		\end{equation*}
		\item \textcolor{blue}{Let $R_a, R_b$ be the kernel representations of $\mathcal{A}, \mathcal{B}$.} A kernel representation of their intersection is given by the union of annihilators:
		\begin{equation}
		\label{eq:kercap}
	  \mathcal{B}_{\cap} = 	\mathcal{A} \cap \mathcal{B} = \text{ker}\,  R_{\cap}(\sigma) := \text{ker}\,  \begin{bmatrix} R_a \\ R_b \end{bmatrix} (\sigma). 
		\end{equation}
	\end{enumerate}
\end{theorem}

\begin{proof}
	  Consider the first  point. \textcolor{blue}{First of all, the controllability assumption guarantees the existence of the image representations}.  If $w_a = P_a (\sigma) \ell_a$ and  $w_b=P_b (\sigma) \ell_b$ are two trajectories of $\mathcal{A}$ and $\mathcal{B}$, respectively, then
  \[w_a+w_b = \begin{bmatrix} P_a & P_b \end{bmatrix} (\sigma) \begin{bmatrix} \ell_a \\ \ell_b \end{bmatrix} \in \mathcal{A} + \mathcal{B}.\]
  Hence, the image representation of the sum system is obtained by stacking next to each other the two image representations of the starting systems. 
		
  For the second point, let $0 = R_a (\sigma) z = R_b (\sigma) z$ for a certain trajectory $z \in   \mathcal{A} \cap \mathcal{B}$. Then \[0 = \begin{bmatrix} R_a \\ R_b \end{bmatrix} (\sigma) z.\] Therefore it follows the expression for the kernel representation of the intersection system starting from the given kernel representations \eqref{eq:kercap}.  
\end{proof}

Theorem \ref{th:dual1} shows a duality between the addition and the 
intersection of behaviors and the corresponding representations as
union of generators and intersection of  annihilators of the starting behaviors. This means that the representations of the sum and intersection systems can be computed with opposite operations  by switching  annihilators with generators, union with intersection, and row-wise with column-wise operations on some matrices (that are built from the given representations). We  expect that similar 
relations still hold true by reversing  the computations of the sum and intersection systems as 
intersection of annihilators and generators, respectively, as shown in Table \ref{tab:duality}. However, these  computations (intersection of annihilators or generators)
need to be implemented in an algorithm. 
\begin{table}[ht]
	\caption{Duality between addition and intersection behaviors representations and their relation with respect to the starting systems representations. \label{tab:duality}}
	\centering
	\begin{tabular}{l|c|c|}
		& generators $P$ & annihilators $R$\\
		\hline
		$\mathcal{A} + \mathcal{B}$ & $\cup$ & $\cap$\\
		\hline
		$\mathcal{A} \cap \mathcal{B}$ & $\cap$ & $\cup$\\
		\hline
	\end{tabular}
\end{table}

\section{Intersection of annihilators and generators}
\label{sec:alg}
The union of generators and annihilators is easy to be computed. But we may need to find the sum or intersection systems representations by the intersection of annihilators or generators, respectively. 
We describe, in the following, the problem of intersection of annihilators. Then, we can apply the observed duality property to get the \textit{dual} computational algorithm for the intersection of generators. 
\begin{problem}
	Given minimal kernel representations of the two   behaviors $\mathcal{A}, \mathcal{B} \in \mathcal{L}^q$, i.e.,
	\begin{equation*}
	\mathcal{A} = \text{ker} \, R_a(\sigma) \ \  \text{and} \ \ \ \mathcal{B} = \text{ker} \, R_b (\sigma),
	\end{equation*}
	with polynomials
	\begin{equation*}
	\begin{aligned}
	R_a(z) &= R_{a,0} z^0 + R_{a,1} z^1 + \cdots + R_{a,\ell_a} z^{\ell_a} \in \R^{p_a \times q} [z], \\
	R_b(z) &= R_{b,0} z^0 + R_{b,1} z^1 + \cdots + R_{b,\ell_b} z^{\ell_b} \in  \R^{p_b \times q} [z],
	\end{aligned}
	\end{equation*}
	find a kernel representation $R_+(\sigma)$ of the sum $\mathcal{A} + \mathcal{B}$, where
	\begin{equation*}
	R_+(z) = R_0 z^0 + R_1 z^1 + \cdots + R_{\ell} z^{\ell}.
	\end{equation*}
\end{problem}
By solving this problem, we obtain a computational method for $(R_a, R_b) \rightarrow R_+$, which is a direct way to get the kernel representation of the sum from the kernel representations of the starting systems.   The computational algorithm for this problem is  Algorithm \ref{alg:intannihilators}, whose correctness is proved below.

\begin{algorithm}[H]
	\caption{Sum of two behaviors by intersection of annihilators}
	\label{alg:intannihilators}
	\begin{algorithmic}
		\Require $R_a, R_b$ minimal kernel representations of the starting systems
		\Ensure $R_+$ kernel representation of the sum system
		
		\State Compute the number of  outputs of the sum $\mathbf{p}(\mathcal{B_+}) = q - \mathbf{m}(\mathcal{B_+})$
		\State  Build the Sylvester matrix
		\begin{equation}
		\label{eq:Sylv}
		\mathcal{S}_L = \bmx \mathcal{T}_L(R_a) \\ \mathcal{T}_L(R_b) \emx
		\end{equation}
		by choosing $L$ such that the left kernel of $\mathcal{S}_L$ has dimension $\mathbf{p}(\mathcal{B_+})$
		\State Compute the left kernel basis $\begin{bmatrix} Z_a & -Z_b
			\end{bmatrix}$ of the Sylvester matrix $\mathcal{S}_L$
		\State Define $R_+$ as
		   \begin{equation*}
		\begin{bmatrix} R_0 & R_1 & \cdots & R_{\ell} \end{bmatrix} := Z_a
			\mathcal{T}  _L(R_a) = Z_b \mathcal{T}_L(R_b).
			\end{equation*}
	\end{algorithmic}
\end{algorithm}

\begin{prop}
	\label{prop2}
Algorithm 
 \ref{alg:intannihilators} computes the kernel representation $R_+$ of the sum of the two starting  behaviors $\mathcal{A} + \mathcal{B}$.
 \end{prop}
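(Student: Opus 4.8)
The plan is to pass to a finite time horizon $[1,L]$, where behaviors are kernels of Toeplitz matrices by Lemma~\ref{lemma:syl}, carry out the intersection of annihilators there as a single left-kernel computation, and then lift the result back to an infinite-horizon kernel representation.

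First I would fix $L \ge \max\{\ell_a,\ell_b\}+1$ and use Lemma~\ref{lemma:syl} to write $\mathcal{A}|_L = \ker \mathcal{T}_L(R_a)$ and $\mathcal{B}|_L = \ker \mathcal{T}_L(R_b)$. Because $R_a,R_b$ are minimal (full row rank), the Toeplitz matrices $\mathcal{T}_L(R_a),\mathcal{T}_L(R_b)$ have full row rank, and a dimension count via \eqref{eq:formuladim} — their row spaces have dimensions $\mathbf{p}(\mathcal{A})L-\mathbf{n}(\mathcal{A})=qL-\dim\mathcal{A}|_L$ and $\mathbf{p}(\mathcal{B})L-\mathbf{n}(\mathcal{B})=qL-\dim\mathcal{B}|_L$ — shows that each row space is the entire space of length-$L$ annihilators of the corresponding restricted behavior. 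Then, since restriction commutes with addition, $\mathcal{B}_+|_L=\mathcal{A}|_L+\mathcal{B}|_L$, and a linear functional annihilates a sum of subspaces exactly when it annihilates each summand; hence the annihilator space of $\mathcal{B}_+|_L$ is the intersection of the two row spaces.

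Next I would identify that intersection with the left kernel of $\mathcal{S}_L$. A row $\begin{bmatrix} Z_a & -Z_b \end{bmatrix}$ of the left kernel is precisely the datum of an element $Z_a\mathcal{T}_L(R_a)=Z_b\mathcal{T}_L(R_b)$ of the intersection, and full row rank of the two blocks makes $Z_a,Z_b$ unique given that element; so a basis of the left kernel corresponds bijectively to a basis of the intersection, i.e.\ to a full set of length-$L$ annihilators of $\mathcal{B}_+|_L$. As an intersection of shift-invariant subspaces is shift-invariant, this annihilator space is the row space of $\mathcal{T}_L(R_+)$ for some polynomial matrix $R_+(z)=R_0+R_1z+\cdots+R_\ell z^{\ell}$, whose coefficient blocks are exactly those produced by Algorithm~\ref{alg:intannihilators} once the left-kernel basis is put in shift-reduced form, and then $\ker\mathcal{T}_L(R_+)=\mathcal{B}_+|_L$. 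Finally, choosing $L$ by the rule in the algorithm — large enough that the left kernel has dimension $\mathbf{p}(\mathcal{B}_+)=q-\mathbf{m}(\mathcal{B}_+)$, the complexity $(\mathbf{m}(\mathcal{B}_+),\mathbf{n}(\mathcal{B}_+))$ being known in advance from Lemma~\ref{comsum} and Theorem~\ref{th:dual1} — forces $L\ge\ell(\mathcal{B}_+)+1$, so Lemma~\ref{lemma:syl} read in reverse upgrades $\ker\mathcal{T}_L(R_+)=\mathcal{B}_+|_L$ to $\ker R_+(\sigma)=\mathcal{B}_+$. Hence $R_+$ is a kernel representation of $\mathcal{A}+\mathcal{B}$.

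The step I expect to be the main obstacle is this last one: verifying that the numerically computed left-kernel basis really carries the block-Toeplitz/shift structure of a polynomial matrix of the correct degree and with the correct number $\mathbf{p}(\mathcal{B}_+)$ of rows, and that the stopping value of $L$ is simultaneously attainable and large enough for the finite-horizon identity to lift. The two ingredients that make this work are the dimension accounting of Lemma~\ref{comsum} (so the complexity of $\mathcal{B}_+$, hence an admissible $L$, is available before the computation) and the full-row-rank of the Toeplitz blocks (so that left-kernel vectors and intersection elements correspond bijectively and the extracted $R_+$ is minimal).
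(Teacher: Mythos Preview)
Your approach is essentially the same as the paper's: both identify the annihilators of $\mathcal{A}+\mathcal{B}$ with the intersection of the row spaces of $\mathcal{T}_L(R_a)$ and $\mathcal{T}_L(R_b)$, and compute that intersection as the left kernel of the stacked Sylvester matrix $\mathcal{S}_L$. Your write-up is more explicit than the paper's about the finite-horizon linear algebra (the duality $(\mathcal{A}|_L+\mathcal{B}|_L)^\perp = \mathcal{A}|_L^\perp \cap \mathcal{B}|_L^\perp$) and about the lifting step from $\mathcal{B}_+|_L$ back to $\mathcal{B}_+$; the obstacle you flag---that the raw left-kernel basis need not already be in block-Toeplitz form corresponding to a polynomial $R_+$ of the right degree---is real and is likewise not fully resolved in the paper's argument, which simply reads off $[R_0\ \cdots\ R_\ell]$ from $Z_a\mathcal{T}_L(R_a)$ and remarks afterwards that the result may be nonminimal.
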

\color{blue}
Before proving Proposition \ref{prop2}, we make some comments on the parameter $L$ that defines the dimension of the Sylvester matrix \eqref{eq:Sylv}. 
	The Sylvester matrix has two multiplication blocks generated by the starting representations,  whose dimensions are $\mathbf{p}(\mathcal{A})(L-\ell_a) \times qL$ and $\mathbf{p}(\mathcal{B})(L-\ell_b) \times qL$, respectively, for a certain $L \geq \max(\ell_a, \ell_b) +1$. If $\mathbf{p}(\mathcal{B_+}) > 0$, we can always choose $L$ such that $S_L$ has a non-trivial left kernel of dimension $\mathbf{p}(\mathcal{B_+})$. This means that the difference between the number of rows and the number of columns of $S_L$ is (at least) $\mathbf{p}(\mathcal{B_+})$, leading to $L = \frac{\mathbf{p}(\mathcal{B_+}) + \ell_a\mathbf{p}(\mathcal{A}) + \ell_b \mathbf{p}(\mathcal{B})}{\mathbf{p}(\mathcal{A}) + \mathbf{p}(\mathcal{B}) - q}$. 
The parameter $L$ defines the number of (block) columns of the two multiplication matrices $ \mathcal{T}_L(R_a)$ and  $\mathcal{T}_L(R_b)$. But these blocks can have different row dimensions, depending on the number of outputs and the lag of the two behaviors.
\color{black}
\begin{proof}
The number of outputs of the sum system is needed to understand the number of rows of the sought (minimal) representation, and it can be computed from the starting systems by \eqref{eq:inputssum}.

 Consider then two trajectories $a \in \mathcal{A} \subset \mathcal{A} + \mathcal{B}$ and $b \in \mathcal{B} \subset \mathcal{A} + \mathcal{B}$.  If the Sylvester matrix $S_L$ has a nontrivial left kernel, the sought  representation $R_+(\sigma)$ satisfies 
 \begin{itemize}
 	\item $R_+(\sigma) a = 0$ $\implies$ $R_+$ is in the row span of  $\mathcal{T}_L(R_a) \implies R_+ = Z_a \mathcal{T}_L(R_a)$; 
 	\item $R_+(\sigma) b = 0$ $\implies$ $R_+$ is in the row span of $\mathcal{T}_L(R_b)  \implies R_+ = Z_b \mathcal{T}_L(R_b)$; 
 \end{itemize}
leading to the equation 
$Z_a \mathcal{T}_L(R_a) =  Z_b \mathcal{T}_L(R_b)$ $\implies$ $\begin{bmatrix} Z_a & -Z_b \end{bmatrix} \mathcal{S}_L = 0$. Observe that, by construction, $\begin{bmatrix} Z_a & -Z_b \end{bmatrix}$ has (at least) $\mathbf{p}(\mathcal{B_+})$ rows. 
We can define the coefficients of $R_+(z)$  as
\begin{equation*}
\begin{bmatrix} R_0 & R_1 & \cdots & R_{\ell} \end{bmatrix} := Z_a
\mathcal{T}  _L(R_a) = Z_b \mathcal{T}_L(R_b).
\end{equation*}
Since $R_+$ is the kernel representation of both the behaviors $\mathcal{A}$ and $\mathcal{B}$, it is also a representation of their sum $\mathcal{A}+\mathcal{B}$. 
\end{proof}

\vspace{.2cm}
	 
 Algorithm \ref{alg:intannihilators} always computes a solution for the kernel representation if the sum system is nontrivial. But, 
if $\mathbf{p}(\mathcal{B_+}) = q -  \mathbf{m}(\mathcal{B_+}) = 0$, the kernel representation is trivial (the behavior has no annihilators different from the zero polynomial). In this case, the Sylvester matrix $\mathcal{S}$  always  has more   columns than rows  for each value of $L$ (or possibly it is square), so the left kernel is trivial (we assume the matrix polynomials $R_a$ and $R_b$ have no common factors, see \cite{matpol}). 
	 
\vspace{.5cm}
Next, we state the \textit{dual} problem for the intersection of generators, where the controllability of the behaviors is assumed to guarantee the existence of the image representation. 
\begin{problem}
  Given  image representations of the two controllable  behaviors $\mathcal{A}, \mathcal{B} \in \mathcal{L}^q$, i.e.,
  \begin{equation*}
    \mathcal{A} = \text{image} \, P_a(\sigma) \ \  \text{and} \ \ \ \mathcal{B} = \text{image} \, P_b (\sigma),
  \end{equation*}
	with polynomials
	\begin{equation*}
	\begin{aligned}
	P_a(z) &= P_{a,0} z^0 + P_{a,1} z^1 + \cdots + P_{a,\ell_a} z^{\ell_a} \in \R^{q \times m_a} [z], \\
	P_b(z) &= P_{b,0} z^0 + P_{b,1} z^1 + \cdots + P_{b,\ell_b} z^{\ell_b} \in  \R^{q \times m_b} [z],
	\end{aligned}
	\end{equation*}
	find an image  representation $P_{\cap}(\sigma)$ of the intersection $\mathcal{A} \cap \mathcal{B}$, where
	\begin{equation*}
	P_{\cap}(z) = P_0 z^0 + P_1 z^1 + \cdots + P_{\ell} z^{\ell}.
	\end{equation*}
\end{problem}
\color{blue}
The computational procedure to approach this problem is \textit{dual} with respect to the previous case and it is illustrated in Algorithm \ref{alg:intgenerators}. 

Because of the \emph{duality}, 
 the multiplication blocks are now stacked in a row, and the (transposed) Sylvester matrix should have a kernel of dimension $\mathbf{m}(\mathcal{B_{\cap}})$. The key point is to observe that the image representation of the intersection lies in the column span of both the starting representations $P_a$ and $P_b$.
\color{black}

\begin{algorithm}
	\caption{Intersection of two behaviors by intersection of generators}
	\label{alg:intgenerators}
	\begin{algorithmic}
		\Require $P_a, P_b$ image representations of the starting systems
		\Ensure $P_{\cap}$ image representation of the intersection system
		
		\State Compute the number of inputs  of the intersection  $\mathbf{m}(\mathcal{B_{\cap}})$
		\State  Build the (transposed) Sylvester matrix
		\[\mathcal{S}_L = \bmx \mathcal{T}_L^T(P_a^T) &  \mathcal{T}_L^T(P_b^T) \emx\]
		by choosing $L$ such that the  kernel of $\mathcal{S}_L$ has dimension $\mathbf{m}(\mathcal{B_{\cap}})$
		\State Compute the  kernel basis $\begin{bmatrix} Z_a \\ -Z_b
		\end{bmatrix}$ of the Sylvester matrix $\mathcal{S}_L$
		\State Define $P_{\cap}$ as
		\begin{equation}
		\begin{bmatrix} P_0 & P_1 & \cdots & P_{\ell} \end{bmatrix} := 
		\mathcal{T}  _L^T(P_a^T) Z_a =  \mathcal{T}_L^T(P_b^T) Z_b.
		\end{equation}
	\end{algorithmic}
\end{algorithm}
\color{black}
\section{Examples}
\label{sec:ex}

We show here some examples that illustrate the results derived in the previous sections. We propose simple analytical computations dealing with the addition and intersection of some LTI systems. 
\subsection{Scalar autonomous systems with simple poles}
\label{sec-4-1}
Consider two scalar autonomous  LTI systems $\mathcal{A}, \mathcal{B}$ defined by their minimal kernel representations 
\begin{equation*}
\mathcal{A} = \textrm{ker} \ R_a(\sigma) \quad\text{and}\quad \mathcal{B} = \textrm{ker} \ R_b(\sigma), 
\end{equation*}
where $R_a, R_b$ are scalar polynomials of degree $n_a = \mathbf{n}(\mathcal{A}), n_b = \mathbf{n}(\mathcal{B})$, respectively. Assuming that all the poles are simple, the trajectories of $\mathcal{A}$ and $\mathcal{B}$ are the sum of damped exponentials:
\begin{equation}
a = \sum_{i=1}^{n_a} a_i \lambda_{a_i}^t \quad\text{and}\quad b = \sum_{i=1}^{n_b} b_i \lambda_{b_i}^t,  \label{sumexp}
\end{equation}
for some coefficients $a_i, b_i$. By \eqref{sumexp} and the
definition of addition of behaviors \eqref{addbeh}, the trajectories
of the sum of two
LTI systems with simple poles are still sums of
damped exponentials, i.e., 
\begin{equation}
\label{eq:sum}
w_+(t) = \sum_{i=1}^{n_a} a_i \lambda_{a_i}^t + \sum_{i=1}^{n_b} b_i \lambda_{b_i}^t = \sum_{i=1}^{n_+} c_i \lambda_{+_i}^t,
\end{equation}
where the poles $\lambda_+$ are the union   of the poles of
$\mathcal{A}$ and $\mathcal{B}$: $\lambda_+ = \lambda(\mathcal{A} +
\mathcal{B}) = \lambda(\mathcal{A}) \cup \lambda(\mathcal{B})$. The
order $n_+$ is the number of distinct elements in $\lambda(\mathcal{A})
\cup \lambda(\mathcal{B})$, that is $n_a + n_b - n_c$ (where $n_c$ is the number of
common poles). 

By \eqref{sumexp} and the definition of intersection of behaviors \eqref{addbeh}, also the trajectories of the intersection of two LTI  behaviors contains sums of damped exponentials whose poles are the common poles of the two behaviors:
\begin{equation}
\label{eq:int}
w_{\cap}(t) = \sum_{i=1}^{n_a} a_i \lambda_{a_i}^t \cap \sum_{i=1}^{n_b} b_i \lambda_{b_i}^t = \sum_{i=1}^{n_{\cap}} d_i \lambda_{\cap_i}^t,
\end{equation}

 $\lambda_{\cap} = \lambda(\mathcal{A} \cap \mathcal{B}) = \lambda(\mathcal{A}) \cap \lambda(\mathcal{B})$. The order of $\mathcal{A} \cap \mathcal{B}$ is the number of common poles $n_c$ between $\mathcal{A}$ and $\mathcal{B}$. 

The previous results are summarized in the following lemma.
\begin{lemma}
	\label{lemma:lcmgcd}
	Let $\mathcal{A}$ and $\mathcal{B}$ be two scalar autonomous LTI behaviors with minimal kernel representations $R_a(\sigma), R_b(\sigma)$, defined by the scalar polynomials $R_a(z), R_b(z)$. The polynomials for the minimal kernel representations $R_+(z)$ of $\mathcal{A} + \mathcal{B}$ and $R_{\cap}(z)$ of $\mathcal{A} \cap \mathcal{B}$ are given by, respectively, the least common multiple and the greatest common divisor of $R_a(z)$ and $R_b(z)$. 
\end{lemma}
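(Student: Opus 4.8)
The plan is to argue entirely at the level of scalar polynomial shift operators, using that $\R[z]$ is a principal ideal domain and that composition of such operators is multiplication of polynomials, $f(\sigma)g(\sigma)=(fg)(\sigma)=g(\sigma)f(\sigma)$. Observe first that Theorem~\ref{th:dual1} does not apply directly: a nontrivial autonomous behavior is never controllable and hence has no image representation, so both $R_+$ and $R_\cap$ must be read off from the kernel side. Two facts will be used repeatedly: a scalar autonomous behavior $\ker R(\sigma)$ is finite dimensional with $\dim\ker R(\sigma)=\deg R$ (its order and lag both equal $\deg R$, in accordance with \eqref{eq:formuladim} for $\mathbf m=0$), and a nonzero scalar polynomial, viewed as a $1\times1$ polynomial matrix, has full row rank, hence is already a minimal kernel representation.

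I would treat the \emph{intersection} first. Let $g=\gcd(R_a,R_b)$ and write $R_a=g\,\tilde R_a$ and $R_b=g\,\tilde R_b$. If $g(\sigma)w=0$ then $R_a(\sigma)w=\tilde R_a(\sigma)\,g(\sigma)w=0$ and likewise $R_b(\sigma)w=0$, so $\ker g(\sigma)\subseteq\mathcal A\cap\mathcal B$. For the reverse inclusion, B\'ezout's identity provides $u,v\in\R[z]$ with $uR_a+vR_b=g$, so that for every $w\in\mathcal A\cap\mathcal B$ we have $g(\sigma)w=u(\sigma)R_a(\sigma)w+v(\sigma)R_b(\sigma)w=0$; hence $\mathcal A\cap\mathcal B\subseteq\ker g(\sigma)$. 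Therefore $\mathcal B_\cap=\mathcal A\cap\mathcal B=\ker g(\sigma)$, which is minimal and satisfies $\deg g=\mathbf n(\mathcal B_\cap)=n_c$.

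For the \emph{sum}, set $p=\mathrm{lcm}(R_a,R_b)$. Since $R_a\mid p$ and $R_b\mid p$, the same divisibility argument gives $\mathcal A\subseteq\ker p(\sigma)$ and $\mathcal B\subseteq\ker p(\sigma)$, hence $\mathcal A+\mathcal B\subseteq\ker p(\sigma)$ because $\ker p(\sigma)$ is a linear subspace. Instead of constructing a decomposition $w=a+b$ directly, I would close the gap by a dimension count: restricting to $[1,L]$ for $L$ large and combining Lemma~\ref{comsum} with the intersection result just obtained,
\[
\dim\big(\mathcal B_+|_L\big)=\deg R_a+\deg R_b-\deg g=\deg p=\dim\big(\ker p(\sigma)|_L\big),
\]
where I used $\deg\mathrm{lcm}(R_a,R_b)+\deg\gcd(R_a,R_b)=\deg R_a+\deg R_b$. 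A subspace contained in another of equal finite dimension coincides with it, so $\mathcal B_+=\mathcal A+\mathcal B=\ker p(\sigma)$, again a minimal kernel representation with $\deg p=\mathbf n(\mathcal B_+)=n_a+n_b-n_c$.

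The ideas here are light; the care goes into the bookkeeping. One should verify that $\mathcal A+\mathcal B$ is again a behavior in $\mathcal L^1$ (a shift-invariant linear subspace, automatically closed since finite dimensional) and that the dimension identity is applied on a common time axis with $L$ beyond every relevant lag. A purely algebraic alternative for the sum --- factor $p=g\,\tilde R_a\tilde R_b$ with $\tilde R_a,\tilde R_b$ coprime and use the Chinese Remainder Theorem (partial fractions) to exhibit $w=a+b$ explicitly --- also works and makes the simple-pole picture of \eqref{eq:sum} transparent, the roots of $p$ being the union of those of $R_a$ and $R_b$ taken with the larger multiplicity; but it is more cumbersome than the dimension argument, so I would keep it as a remark.
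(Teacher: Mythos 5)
Your proof is correct, but it takes a genuinely different route from the paper. The paper works inside the standing simple-pole assumption of Section~\ref{sec-4-1}: it parametrizes the trajectories of $\mathcal A$ and $\mathcal B$ as sums of damped exponentials \eqref{sumexp}, reads off from \eqref{eq:sum} and \eqref{eq:int} that the poles of $\mathcal A+\mathcal B$ are the union and the poles of $\mathcal A\cap\mathcal B$ the intersection of the original poles, and then identifies these pole sets with the roots of the lcm and gcd, the simplicity of the poles doing the final step. You instead argue algebraically: B\'ezout's identity gives $\mathcal A\cap\mathcal B=\ker\gcd(R_a,R_b)(\sigma)$ directly, and for the sum you combine the divisibility inclusion $\mathcal A+\mathcal B\subseteq\ker\operatorname{lcm}(R_a,R_b)(\sigma)$ with the dimension count from Lemma~\ref{comsum} and the identity $\deg\operatorname{lcm}+\deg\gcd=\deg R_a+\deg R_b$. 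What your route buys is generality and rigor about multiplicities: it never uses the simple-pole hypothesis, so it covers repeated roots (where ``union of poles without repetition'' would have to be replaced by maximal multiplicities, i.e.\ exactly the lcm), and it is the natural stepping stone to the matrix-polynomial extension mentioned in the remark after the lemma. You also correctly observe that the image-representation half of Theorem~\ref{th:dual1} is unavailable here since nontrivial autonomous behaviors are not controllable, a point the paper leaves implicit. What the paper's argument buys is immediacy and consistency with the signal-level picture \eqref{eq:sum}--\eqref{eq:int} that the subsequent simulations verify. Minor bookkeeping you flagged yourself (that $\mathcal A+\mathcal B$ is a finite-dimensional, hence closed, shift-invariant subspace, and that restricted behaviors of equal dimension with one containing the other coincide for $L$ beyond the lags) is handled at the same level of rigor as the paper, so there is no gap.
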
 
\begin{proof}
	The expression of the trajectories of the sum and intersection systems are given in \eqref{eq:sum} and \eqref{eq:int}, respectively. The systems poles of the two starting systems are the exponents $\lambda_{a_i}, \lambda_{b_i}$, which are also roots of the (scalar) polynomials $R_a(z)$ and $R_b(z)$, respectively. 
	From \eqref{eq:sum}, we can see that the poles of the sum system are the union of $\lambda_{a_i}$ and $\lambda_{b_i}$, hence the roots of $R_+(z)$ are the union (without repetitions) of the roots of $R_a(z)$ and $R_b(z)$. Similarly, from \eqref{eq:int}, it follows that the roots of $R_{\cap}(z)$ are the intersection of the roots of $R_a(z)$ and $R_b(z)$. Since all the poles are simple, the thesis follows. 
\end{proof}
\begin{remark}
	The result of Lemma \ref{lemma:lcmgcd} can be naturally extended to the case of Multi-Input Multi-Output systems by replacing scalar with matrix polynomials. 
\end{remark}

The fact that $\lambda_+$ is the union of the poles of the two systems can also be checked in Matlab. Two (random) systems can be generated (in state-space form) by the function \textit{drss}, once we fix the number of inputs, outputs and the orders (in the input-output setting, the sum is well-defined only for systems with the same number of inputs and outputs). 
\begin{small}
\begin{verbatim}
 pole(sys1)      % poles first system
 pole(sys2)      % poles second system
 pole(sys1 + sys2) % poles of the sum
\end{verbatim}
\end{small} 
 While the addition of systems can be easily obtained by the sum, the intersection of two systems is not immediate to compute, and we should use some ad hoc algorithms (only  algorithms for the common dynamic estimation of scalar autonomous systems exist at the moment). Anyway, the Matlab function \textit{intersect} can be called to check the presence of common poles among the poles of the two systems.

\begin{remark}
  If the coefficients of the given representations are inexact, 
   the presence of common poles between the polynomials in the kernel representations could not be detected; they can be estimated
   by computing 
     approximate common divisors, e.g., via the algorithms developed in \cite{agcdode} for scalar polynomials (in the SISO case) or in \cite{matpol} for matrix polynomials (in the MIMO case). 
\end{remark}

\subsection{A single-input single-output system and an autonomous system}
\label{sec:ex2}

Consider a single-input single-output system and an autonomous system
with simple poles. The kernel representation of the first is given by
a $1 \times 2$ matrix polynomial:
\begin{equation*}
\mathcal{A} = \textrm{ker} \begin{bmatrix} q_a(\sigma) & p_a(\sigma) \end{bmatrix}.
\end{equation*}
The kernel representation of autonomous systems involves a square
matrix polynomial $R(\sigma)$ whose determinant is nonzero. The poles
are then the roots of the determinant of $R(\sigma)$. We consider  the
following kernel representation:
\begin{equation*}
\mathcal{B} = \textrm{ker} \begin{bmatrix} 1 & 0 \\ 0 & p_b(\sigma) \end{bmatrix}.
\end{equation*}
By choosing an input / output partition of the set of  variables\footnote{This partition is always possible if there are at least two
	variables. Starting from a difference equation of the form $R(\sigma) w
	= 0$, it is enough to switch to an input / output representation by
	splitting $w = (u; y)$ into a set of inputs and outputs and to partition $R = [Q, P]$
	accordingly.} $w = (u, y)$,
the trajectories of the first system satisfy the equation 
\begin{equation*}
q_a(\sigma)
u = -p_a(\sigma) y \quad\iff\quad y = -q_a(\sigma) / p_a(\sigma) u = h_a * u,
\end{equation*}
 where the star denotes the convolution product. 
We need to add to these trajectories the free response $y_{a, f} \in
\text{ker}\, p_a(\sigma)$,  which are the trajectories corresponding to
zero input. Hence, the trajectories of the system $\mathcal{A}$ have
the general form
\begin{equation*}
w_a = \begin{bmatrix} u \\ y_{a, f} + h_a * u \end{bmatrix}. 
\end{equation*}  
Observe that both the free response $y_{a, f}$ as well as the impulse
response $h_a$ are sums of damped exponential signals of the form
$\sum_{i=1}^{n_a} \alpha_i z_{a, i}^t$ where $n_a$ is the degree of
the polynomial $p_a$, while $z_{a, 1}, \dots, z_{a, n_a}$ are the
roots of $p_a$. 

The trajectories of the system $\mathcal{B}$ satisfy the equation
\(
\bsm u \\ p_b(\sigma) y \esm = 0.
\)
 We see that the input can only be zero so that the output is constrained to the free response, i.e., $y_{b, f} \in \text{ker}\, p_b(\sigma)$. The trajectories of the system $\mathcal{B}$ have the general form 
\(
w_b = \bsm 0 \\ y_{b, f} \esm.
\)
The free response $y_{b, f}$ is still a sum of damped exponentials
 whose exponents are the poles of the system $\mathcal{B}$. 

The sum $\mathcal{A} + \mathcal{B}$ is a single-input single-output system whose
trajectories  have the form
$w_+ = w_a + w_b$, and the poles $\lambda(\mathcal{A} +
\mathcal{B})$ are the union of the poles $\lambda(\mathcal{A}) \cup \lambda(\mathcal{B})$.  But the poles
$\lambda(\mathcal{B})$ appear only in the free response and not in the
convolution with the input. A kernel representation of the sum
$\mathcal{A} + \mathcal{B}$ is given by 
\begin{equation}
\label{eq:kersum}
R_+(z) = p_b(z) \begin{bmatrix} q_a(z)  & -p_a(z) \end{bmatrix}. 
\end{equation}  
The kernel representation \eqref{eq:kersum} shows that the sum of a single-input single-output system with an autonomous system 
 is always uncontrollable because of the presence of the common
factor $p_b(z)$ \cite{PW98} (i.e., the matrix $R_+(z)$ is not left prime). 

The trajectories of the intersection $\mathcal{A} \cap \mathcal{B}$ should be of the form $w_a$ and $w_b$ at the same time. Hence, the input is constrained to be zero and the output contains only the free response $y_{\cap, f}$, which should be in the kernels of both $p_a(z)$ and $p_b(z)$, i.e., in the kernel of their greatest common divisor. Hence, the intersection is an autonomous system  whose kernel representation has the following expression:
\begin{equation}
\label{eq:keraut}
R_{\cap}(z) = \begin{bmatrix} 1 & 0 \\ 0 & \text{gcd}\ \big(p_a(z), p_b(z)\big) \end{bmatrix}. 
\end{equation}

%
%
%
%
%


\section{Conclusion}
We studied  the two basic operations of addition and intersection of LTI systems in the behavioral setting, showing that they are different from the classical definitions in the input-output setting. 
The proposed trajectory-based definition of sum and intersection allows us to perform such computations directly from the data. 
Moreover, we saw how the resulting system  representations depend on the representations of the starting systems, and we proposed algorithms for their computations based on structured matrices and polynomial computations. We summarize some of the main advantages due to  the proposed definitions and results:
\begin{enumerate}
	\item the intersection has been extended to open  systems (systems with inputs);
	
	\item the two operations can be performed directly using the system trajectories (observed data); the system representations, if needed for the problem, can be computed at a later stage;
	
	\item we can sum and intersect systems with different numbers of inputs and outputs (but the same number of variables!).
\end{enumerate}

The development and implementation of an algorithm to estimate the common dynamics among open systems can be object of future research. 
		
\section*{Acknowledgment}

 
Ivan Markovsky is an ICREA research professor. The research leading to these results has received funding from: the Catalan Institution for Research and Advanced Studies (ICREA), the Fond for Scientific Research Vlaanderen (FWO) projects G090117N and G033822N; and the Fonds de la Recherche Scientifique FNRS--FWO EOS Project 30468160. Antonio Fazzi was supported by the Italian Ministry of University and Research under the PRIN17 project Data-driven learning of constrained control systems, contract no. 2017J89ARP.

\section*{Conflict of interest}
The authors declare that they have no conflict of interest.

\section*{References}

\bibliographystyle{elsarticle-num}
\bibliography{mypapers,add}

\end{document}